\documentclass[11pt]{amsart}
\usepackage{graphicx}
\usepackage{amssymb}
\usepackage{amsmath}


\newcommand{\R}{\mathbb R}

\newcommand{\N}{\mathbb{N}}


\newcommand{\LL}{\mathcal L}


\newtheorem{teo}{Theorem}[section]

\theoremstyle{remark}
\newtheorem{remark}[teo]{Remark}

\theoremstyle{definition}
\newtheorem{defi}[teo]{Definition}

\numberwithin{equation}{section}

\title[Numerical computation for $p(x)-$Laplace equations]{Effective numerical computation of $p(x)-$Laplace equations in 2D}
\author[A. Arag\'on, J. Fern\'andez Bonder and D. Rubio]
{Adriana Arag\'on, Juli\'an Fern\'andez Bonder and Diana Rubio}

\address{Adriana Arag\'on and Diana Rubio \hfill\break\indent
Centro de Matem\'atica Aplicada (CEDEMA), Universidad Nacional de San Mart\'in, 
Tornav\'ias Mart\'in de Irigoyen No.3100, Campus Miguelete, B1650 Villa Lynch, Provincia de Buenos Aires, Argentina.}

\email[A. Arag\'on]{\tt aaragon@unsam.edu.ar}
\email[D. Rubio]{\tt drubio@unsam.edu.ar}

\address{Juli\'an Fern\'andez Bonder \hfill\break\indent
Departamento  de Matem\'atica, FCEN -- Universidad de Buenos Aires, and\hfill\break\indent 
Instituto de C\'alculo -- CONICET\hfill\break\indent $0+\infty$ building, Ciudad Universitaria (1428), Buenos Aires, Argentina.}

\email{{\tt jfbonder@dm.uba.ar}\hfill\break\indent {\it Web page:} {\tt http://mate.dm.uba.ar/$\sim$jfbonder}}

\thanks{JFB is supported by CONICET PIP Nº 11220150100032CO and by ANPCyT, PICT 2016-1022 and is a member of CONICET}

\subjclass[2010]{65N30, 35J92}


\keywords{$p(x)-$Laplacian, finite elements, decomposition-coordination method}

\begin{document}

\begin{abstract}
In this article we implement a method for the computation of a nonlinear elliptic problem with nonstandard growth driven by the $p(x)-$Laplacian operator. Our implementation is based in the  {\em decomposition--coordination} method that allows us, via an iterative process, to solve in each step a linear differential equation and a nonlinear algebraic equation. Our code is implemented in {\sc MatLab} in 2 dimensions and turns out to be extremely efficient from the computational point of view.
\end{abstract}

\maketitle

\section{Introduction}
In this article we consider nonlinear elliptic problems with nonstandard growth. More precisely, we will focus on the so-called $p(x)-$Laplace operator that is defined as
$$
\Delta_{p(x)} u = \nabla \cdot \left( |\nabla u|^{p(x)-2}\nabla u\right).
$$
This operator has been used by many authors in view of its applications to several fields of sciences such as electrorheological fluids and image processing. We may cite the works \cite{CLR, R} and references therein.

From the mathematical point of view, this operator presents interesting difficulties since when the exponent $p(x)$ is not a constant, the operator is no longer homogeneous.

Recall that when $p(x)=p$ constant the $p(x)-$Laplacian becomes the well-known $p-$Laplacian operator that is well studied in the literature both from the theoretical and the computational point of view. See \cite{BL}, for instance.

In this work we present a numerical implementation of the {\em decomposition-coordination method} to solve $p(x)-$Laplacian problems of the form
\begin{equation}\label{eq.1}
\begin{cases}
-\Delta_{p(x)} u = f & \text{ in }\Omega,\\
u=g & \text{ on }\partial\Omega,
\end{cases}
\end{equation}
where $\Omega$ is a Lipschitz domain in $\R^2$ and $f\colon\Omega\to \R, g\colon \partial\Omega\to \R$ are suitable data. 

The implementation is done in {\sc MatLab} but is an arbitrary choise and our code can be easily adapted to any other programming language.

The problem of efficiently computing a solution to \eqref{eq.1} presents several difficulties due to the strong nonlinearity and more importantly due to the degenerate/singular character of the operators in points $x\in\Omega$ where $\nabla u(x)=0$ depending on $p(x)>2$ or $p(x)<2$ respectively.

As far as we know, the first rigorous analysis for numerical approximations for \eqref{eq.1} was done in \cite{DPLM}. In that article the authors studied the approximation of \eqref{eq.1} by means of the Finite Element Method and proved the convergence of the method, See also \cite{D, DPM2}.

In a subsequent paper, \cite{DPM}, the authors introduce the so-called {\em decomposition-coordination method} for problem \eqref{eq.1} that is an iterative procedure that has the advantage that in each step of the iteration one has to solve a {\em linear PDE} and a {\em nonlinear algebraic equation} which in theory makes the problem much more tractable and efficient from a computational point of view. 

The purpose of this work is to implement simple and efficiently the method developed in \cite{DPM} that allow to solve \eqref{eq.1} in dimension 2. Moreover we provide with a proof of convergence of the method under some natural assumptions.

The code implemented in MatLab can be downloaded from 
\begin{center}
{\tt https://bit.ly/3Ec1T5s}. 
\end{center}
In case the interested reader run into troubles running the program, don't hesitate in contact any of the authors.

To end this introduction, let us mention the work \cite{CZ} where the authors numerically solve \eqref{eq.1} with very different techniques that the ones presented here.

\section{Description of the method}\label{algoritmo}
In this section we give a description of the decomposition-coordination method.

This method is an iterative procedure that has the following form: First one define two vector fields $\eta_1, \nu_0\colon \Omega\to \R^2$.

Then, recursively, if we have computed $u_{n-1}$, $\eta_n$ and $\nu_{n-1}$ we compute  $u_n$, $\eta_{n+1}$ and $\nu_n$ in the following form:
\begin{enumerate}
\item Solve the {\em linear PDE}
\begin{equation}\label{eq.u}
-\Delta u_n  = \nabla\cdot(\eta _n - \nu_{n-1}) + f
\end{equation}

\item Update $\nu$ by solving the {\em algebraic nonlinear equation}
\begin{equation}\label{eq.nu}
|\nu_n|^{p-2}\nu_n + \nu_n = \eta_n + \nabla u_n.
\end{equation}

\item Finally, update $\eta$ as
\begin{equation}\label{eq.eta}
\eta_{n+1} = \eta_n + \nabla u_n - \nu_n.
\end{equation}
\end{enumerate}

In \cite{DPM} it is proved that this procedure is in fact convergent and that the sequence $\{u_n\}_{n\in\N}$ defined by \eqref{eq.u} converges to the actual solution $u$ of \eqref{eq.1}.

Have in mind that $p=p(x)$ in \eqref{eq.nu}.

Observe that in this method, in each step of the iteration, one must solve a linear PDE. This PDE is discretized using the finite element method. The matrix of the method is the same in each step.

Also observe that the nonlinear algebraic equation \eqref{eq.nu} that is a vectorial equation, can be easily solved in this form: denote $t_n=|\nu_n|$, and then $t_n \in\R$ is the solution to
$$
t_n(t_n^{p-2}+1) = r_n,
$$
where $r_n=|\eta_n + \nabla u_n|$. This can be easily solved using, for instance, the bisection method or Newton-Raphson method. Once that $t_n$ is computed, one solve
$$
\nu_n = \frac{\eta_n + \nabla u_n}{t_n^{p-1}+1}.
$$

Step 3 in the iteration is a simple evaluation.

\section{Description of the code}

We assume that the mesh $\mathcal T$ and the stiffness matrix $A$ for the Laplace operator has been generated in advance. More precisely, the information needed for the triangulation $\mathcal T$ is given by the following variables that are previously loaded in the workspace:
\begin{itemize}
\item {\tt vertex\_coordinates} is a $N_{\tt n}\times 2$ array such that the $k^{th}-$row of the matrix gives the coordinates of the $k^{th}-$vertex.

\item {\tt elem\_vertices} is a $N_{\tt t} \times 3$ array such that the $j^{th}-$row of the matrix gives the numbers of the 3 vertices defining the $j^{th}-$element.

\item {\tt dirichlet} is a $N_{\tt d}\times 1$ array such that enumerates the boundary nodes.
\end{itemize}

The stiffness matrix $A$ is an $N_{\tt n}\times N_{\tt n}$ matrix such that
$$
A(i,j) = \int_\Omega \nabla \phi_i\nabla \phi_j\, dx
$$
where $\{\phi_i\}$ is the usual picewise linear finite element basis.

Next, we create the functions that refer to our equation \eqref{eq.1}, for instance
\begin{itemize}
\item $p = {\tt @}(x,y) 1.5$

\item $f= {\tt @}(x,y) 1$

\item $g= @(x,y)0$
\end{itemize}

Here $p$ is the variable exponent in \eqref{eq.1} and $f, g$ are the source and boundary terms respectively. In this example we are computing the $p-$Laplace equation with constant exponent $p=1.5$, homogeneous Dirichlet boundary data and source $f=1$.

\subsection{Start of the algorithm}
The algorithm starts with two basic tasks:

\begin{enumerate} 
\item First define ${\tt p}$ as a $N_{\tt t}\times 1$ array as the exponent in each element that will be taken constant. For instance, what we implemented, is to compute ${\tt p}(i)$ as the value of $p$ at the barycenter of the $i^{th}-$triangle.

\item Compute ${\tt f}$ a $N_{\tt n}\times 1$ array such that 
$$
{\tt f}(k) = \int_\Omega f\phi_k\, dx.
$$
This integral is easily computed by any usual quadrature method.

\item Finally, the algorithm adjust the stiffness matrix $A$ and the source term ${\tt f}$ in order to impose the boundary data $g$.

\end{enumerate}

\subsection{The iteration}

First, define arbitrarily $\eta$ and $\nu$ as two $N_{\tt t}\times 2$ arrays. We defined them as (small) random arrays to start the iteration.
\begin{enumerate}
\item Compute the {\em divergence} of $\eta-\nu$ in the form
$$
{\tt g}(k) = \int_\Omega (\eta-\nu)\nabla \phi_k\, dx.
$$
This integral is computed by any quadrature method and recall that ${\tt g}$ is a $N_{\tt n}\times 1$ array.

\item Solve the equation $A{\tt u} = ({\tt g} + {\tt f})$.

\item Compute ${\tt gradu}$ as a $N_{\tt t}\times 2$ array such that ${\tt gradu}(i,:)$ is the gradient of ${\tt u}$ in the $i^{th}-$ triangle.

\item Solve the nonlinear algebraic equation
$$
|\bar \nu(i,:)|^{{\tt p}(i)-2} \bar \nu(i,:) + \bar \nu(i,:) = \eta(i,:) + {\tt gradu}(i,:)
$$
In order to solve this equation, for each triangle $i$, 
define the scalar quantities $r=|\eta(i,:) + {\tt gradu}(i,:)|$ and $s=p(i)$. Then solve the scalar equation
$$
x^{s-1} + x = r,\qquad x\ge 0.
$$
This problems has a unique solution, $x\in [0,r]$. We solve this problem by a bisection algorithm.

Then one takes that $\bar \nu(i,:) = (\eta(i,:) + {\tt gradu}(i,:))/(x^{s-2}+1)$.

Next one updates $\nu = \bar\nu$.

\item To finish the iteration, define $\bar\eta = \eta + {\tt gradu} - \nu$ and update $\eta=\bar\eta$.
\end{enumerate}

This process is then iterated until a stopping criteria is achieved.

\subsection{Stopping criteria}

There are several possibilities for a stopping criteria and we have not found a perfect one. In our opinion depending on the particular problem different choices of criteria worked better that others. Nevertheless, the criteria that was more robust in facing particular problems (in particular, all of the experiments presented in the next section use this criteria) is the following:
\begin{itemize}
\item Define an error tolerance $\varepsilon>0$. Then run the iteration until the relative error between two consecutive solutions is less that $\varepsilon$.
\end{itemize}

So, denoting $|{\tt u}|$ as the euclidean norm of the vector ${\tt u}$, we defined the {\em relative error} as
$$
\frac{|{\tt u} - {\tt \bar u}|}{|{\tt u}|}
$$
where ${\tt u}$ is the {\em old} solution computed in the former iteration and ${\tt \bar u}$ is the {\em new} solution computed in the last iteration.

\section{Convergence of the algorithm}

In this section we show the convergence of the algorithm described above. The convergence is a consequence of a general method described in \cite{Glowinski}.

In fact, in \cite[Chapter VI]{Glowinski}, the following general method is introduced for the approximation of minimization problems of the form
\begin{equation}\label{P}
\min_{v\in V} F(Bv) + G(v),
\end{equation}
where $V$ and $H$ are topological vector spaces, $B\in L(V,H)$ and $F\colon H\to\bar\R$ and $G\colon V\to\bar \R$ are convex, proper, lower semicontinuous functionals.

In our case, $H=\left(L^{p(x)}(\Omega)\right)^N$, $V=W_0^{1,p(x)}(\Omega)$, $Bv = \nabla v$, and
$$
F(\nu) = \int_\Omega \frac{|\nu|^{p(x)}}{p(x)}\, dx \quad \text{and}\quad G(v) = -\int_\Omega fv\, dx
$$

In order to solve the problem \eqref{P} we introduce, for any $r\ge 0$, the so-called {\em augmented Lagrangian} that is defined as
$$
\LL_r(v,\nu,\eta) := F(\nu) + G(v) + \int_\Omega \eta\cdot(\nabla v - \nu)\, dx + \frac{r}{2} \int_\Omega |\nabla v - \nu|^2\, dx.
$$
$$
\LL_r\colon W^{1,p(x)}_0(\Omega)\times \left(L^{p(x)}(\Omega)\right)^N \times \left(L^{p'(x)}(\Omega)\right)^N\to \bar \R.
$$

We need the following definition:
\begin{defi} We say that $(u^*,\nu^*,\eta^*)$ is a saddle point of $\LL_r$ if
$$
\LL_r(u^*,\nu^*,\eta)\le \LL_r(u^*,\nu^*,\eta^*)\le \LL_r(u,\nu,\eta^*)
$$
for any $\eta\in (L^{p'(x)}(\Omega))^N$ and any $(u,\nu)\in W^{1,p(x)}_0(\Omega)\times \left(L^{p(x)}(\Omega)\right)^N$.
\end{defi}

In \cite[Chapter VI]{Glowinski}, Theorem 2.1, it is proved the following
\begin{teo}
$(u^*,\nu^*,\eta^*)$ is a saddle point of $\LL_r$ if and only if $u^*$ is a solution of \eqref{P} and $\nabla u^* = \nu^*$.
\end{teo}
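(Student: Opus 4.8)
The plan is to prove the two implications separately, exploiting the convexity structure of $\LL_r$ together with the well-known variational characterization of the saddle points of augmented Lagrangians. First I would record the elementary observation that the quadratic penalization term $\frac r2\int_\Omega |\nabla v-\nu|^2\,dx$ does not affect the value of $\LL_r$ whenever $\nabla v = \nu$; this is what makes the augmented Lagrangian and the plain Lagrangian $\LL_0$ share the same saddle points, and it is the mechanism that ties the saddle-point condition to the constraint $\nabla u^* = \nu^*$.

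For the implication ``saddle point $\Rightarrow$ solution of \eqref{P} with $\nabla u^* = \nu^*$'', I would start from the left inequality $\LL_r(u^*,\nu^*,\eta)\le \LL_r(u^*,\nu^*,\eta^*)$ for all $\eta\in (L^{p'(x)}(\Omega))^N$. Since $\eta\mapsto \LL_r(u^*,\nu^*,\eta)$ is affine, the only way $\eta^*$ can be a maximizer over the whole linear space is that the linear part vanishes identically, i.e.\ $\int_\Omega \eta\cdot(\nabla u^*-\nu^*)\,dx = 0$ for every test field $\eta$, which forces $\nabla u^* = \nu^*$ a.e.\ in $\Omega$. With the constraint in hand, the penalization term vanishes at $(u^*,\nu^*,\cdot)$, so the right inequality $\LL_r(u^*,\nu^*,\eta^*)\le \LL_r(u,\nu,\eta^*)$ reads
\begin{equation*}
F(\nu^*) + G(u^*) \le F(\nu) + G(u) + \int_\Omega \eta^*\cdot(\nabla u-\nu)\,dx + \frac r2\int_\Omega |\nabla u - \nu|^2\,dx
\end{equation*}
for all admissible $(u,\nu)$. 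Restricting to the admissible pairs that satisfy the constraint $\nabla u = \nu$ kills the last two terms on the right and gives $F(\nabla u^*) + G(u^*) \le F(\nabla u) + G(u)$ for every $u\in W^{1,p(x)}_0(\Omega)$, which is exactly the statement that $u^*$ minimizes \eqref{P}.

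For the converse, suppose $u^*$ solves \eqref{P} and set $\nu^* := \nabla u^*$. Here I would invoke the first-order optimality condition for the convex, proper, l.s.c.\ functional $u\mapsto F(\nabla u)+G(u)$: there exists a multiplier $\eta^*\in (L^{p'(x)}(\Omega))^N$ (a subgradient of $F$ at $\nu^*$, transported by $-B^*$) such that $B^*\eta^* \in \partial G(u^*)$ and $\eta^* \in \partial F(\nu^*)$ in the appropriate duality; concretely $\eta^* = |\nu^*|^{p(x)-2}\nu^*$ since $F$ is differentiable, and $-\div \eta^* = f$ is just the Euler--Lagrange equation for \eqref{eq.1}. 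Then the right inequality in the saddle-point definition follows by writing, for arbitrary $(u,\nu)$,
\begin{equation*}
\LL_r(u,\nu,\eta^*) - \LL_r(u^*,\nu^*,\eta^*) \ge \big(F(\nu)-F(\nu^*)-\langle \eta^*,\nu-\nu^*\rangle\big) + \big(G(u)-G(u^*)+\langle \eta^*,\nabla u - \nabla u^*\rangle\big) + \tfrac r2\|\nabla u-\nu\|_2^2,
\end{equation*}
where the first bracket is $\ge 0$ by convexity of $F$ and $\eta^*\in\partial F(\nu^*)$, the second bracket is $\ge 0$ by convexity of $G$ and $-B^*\eta^*\in\partial G(u^*)$, and the quadratic term is manifestly $\ge 0$; the cross terms recombine because $\nabla u^* = \nu^*$. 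The left inequality is immediate since $\nabla u^* - \nu^* = 0$ makes $\LL_r(u^*,\nu^*,\eta)$ independent of $\eta$.

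The main obstacle, and the point that needs care rather than routine manipulation, is the converse direction: justifying the existence of the multiplier $\eta^*$ in the correct dual space $(L^{p'(x)}(\Omega))^N$ and with the correct relation to the Euler--Lagrange equation. This requires either a direct computation of the Gateaux derivative of $F$ on $(L^{p(x)}(\Omega))^N$ (noting $F$ is everywhere finite and continuous there, so no constraint qualification is needed and subdifferential calculus applies cleanly) together with the chain rule $\partial(F\circ B + G) = B^*\partial F(B\cdot) + \partial G$, or a direct appeal to the abstract saddle-point theory of \cite[Chapter VI]{Glowinski}. I would present the differentiability computation explicitly, since $F$ is smooth away from the degeneracy and the identification $\eta^* = |\nabla u^*|^{p(x)-2}\nabla u^*$ is exactly the quantity the algorithm \eqref{eq.u}--\eqref{eq.eta} is designed to track.
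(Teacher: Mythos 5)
Your argument is correct, but it is worth noting that the paper does not prove this theorem at all: it is quoted verbatim from Glowinski \cite[Chapter VI, Theorem 2.1]{Glowinski}, so your self-contained verification is a genuinely different (and more informative) route. What you do: exploit that $\eta\mapsto\LL_r(u^*,\nu^*,\eta)$ is affine to extract the constraint $\nabla u^*=\nu^*$ from the left inequality, then restrict the right inequality to the constraint set $\{\nabla u=\nu\}$ to get minimality; conversely, use smoothness of $F$ to exhibit the explicit multiplier $\eta^*=|\nabla u^*|^{p(x)-2}\nabla u^*$ and verify both inequalities by the subgradient inequality for $F$, the Euler--Lagrange equation (which makes your second bracket vanish identically, since $G$ is linear), and nonnegativity of the penalty term; indeed your displayed ``$\ge$'' is an equality after the cross-term recombination. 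What this buys over the citation is precisely the point you flag: in the abstract framework the converse requires a qualification hypothesis to produce the multiplier, whereas here the everywhere-finite, G\^ateaux differentiable $F$ (under the standing assumption $p(x)\ge 2$, also needed so that $\nabla u-\nu\in L^2$ and $\LL_r$ is finite, as the paper notes) yields $\eta^*\in (L^{p'(x)}(\Omega))^N$ explicitly, and it makes transparent why the algorithm tracks $|\nabla u|^{p(x)-2}\nabla u$. Two small points to fix in a written version: the relation ``$B^*\eta^*\in\partial G(u^*)$'' has the wrong sign (it should be $-B^*\eta^*\in\partial G(u^*)$, i.e.\ $\int_\Omega \eta^*\cdot\nabla w\,dx=\int_\Omega fw\,dx$ for all $w$, which is what your display actually uses), and the ``if'' direction of the statement should be read as asserting the \emph{existence} of some $\eta^*$ making $(u^*,\nu^*,\eta^*)$ a saddle point, which is exactly what your construction provides.
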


Therefore, to find a solution to \eqref{P} we need to find saddle points of $\LL_r$. In \cite[Chapter VI]{Glowinski} the author introduced the decomposition coordination method to approximate these saddle points.

In our case, the method consists in the following:
\begin{enumerate}
\item Let $(\nu_0,\eta_1)\in (L^{p(x)}(\Omega))^N \times (L^{p'(x)}(\Omega))^N$ be given.

\item Assume that $(\nu_{n-1}, \eta_n)$ is known and define
\begin{itemize}
\item $u_n$ such that
\begin{equation}\label{a}
\int_\Omega f(u_n-v)\, dx + \int_\Omega (\eta_n + r (\nabla u_n - \nu_{n-1}))\cdot(\nabla v - \nabla u_n)\, dx  \ge 0 
\end{equation}
for every $v\in W^{1,p(x)}_0(\Omega)$.

\item $\nu_n$ such that
\begin{equation}\label{b}
\int_\Omega \left(|\nu_n|^{p(x)-2}\nu_n - \eta_n + r(\nu_n - \nabla u_n)\right)\cdot (\nu-\nu_n)\, dx\ge 0
\end{equation}
for every $\nu\in  (L^{p(x)}(\Omega))^N$.

\item $\eta_{n+1}$ such that
\begin{equation}\label{c}
\eta_{n+1} = \eta_n + r(\nabla u_n - \nu_n).
\end{equation}
\end{itemize}

\end{enumerate}

In order to apply the convergence result of \cite{Glowinski} we need to impose the following restriction of the variable exponent $p(x)$.
\begin{teo}\label{thm.glowinski}
Assume that $p(x)\ge 2$. Let $(u^*,\nu^*,\eta^*)$ be a saddle point of $\LL_r$ and $(u_n,\nu_n,\eta_{n+1})$ be the sequence defined by the algorithm \eqref{a}--\eqref{c}. Then
\begin{align*}
u_n \to u^* &\quad \text{strongly in } H^1_0(\Omega)\\
\nu_n \to \nu^* &\quad \text{strongly in } (L^2(\Omega))^N\\
\eta_{n+1}-\eta_n \to 0 & \quad \text{strongly in } (L^2(\Omega))^N\\
\eta_n &\quad \text{is bounded in } (L^2(\Omega))^N
\end{align*}
\end{teo}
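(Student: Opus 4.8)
The plan is to reduce everything to the abstract convergence theorem in \cite[Chapter VI]{Glowinski} for the decomposition–coordination (ALG2) method, so the work is mostly checking that our concrete choice of $V$, $H$, $B$, $F$, $G$ satisfies the hypotheses of that theorem. First I would record the structural facts: $B=\nabla\colon W^{1,p(x)}_0(\Omega)\to (L^{p(x)}(\Omega))^N$ is linear and bounded, and by the Poincaré inequality in variable exponent spaces it is an isomorphism onto its range with a closed range, so $\|Bv\|$ is an equivalent norm on $V$; $G(v)=-\int_\Omega fv\,dx$ is linear, hence convex, and continuous (for $f$ in the appropriate dual space), so it is proper, convex and l.s.c.; and $F(\nu)=\int_\Omega |\nu|^{p(x)}/p(x)\,dx$ is the modular of $(L^{p(x)}(\Omega))^N$, which for $p(x)\ge 2$ (in fact for $p^-\ge 1$) is convex, proper and l.s.c., and moreover, since $p(x)\ge 2$, $F$ is \emph{strongly convex / uniformly convex} in the sense needed by Glowinski's theorem. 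The subdifferential identity $\partial F(\nu)\ni |\nu|^{p(x)-2}\nu$ makes \eqref{b} exactly the Euler inequality of the $\nu$-minimization step of ALG2, and \eqref{a} is the Euler inequality of the $u$-minimization step, while \eqref{c} is the multiplier update; so the algorithm \eqref{a}–\eqref{c} is literally ALG2 applied to \eqref{P}.

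Next I would invoke Theorem~2.1 of \cite[Chapter VI]{Glowinski} (quoted above) to know a saddle point $(u^*,\nu^*,\eta^*)$ of $\LL_r$ exists and satisfies $\nabla u^*=\nu^*$, and then invoke the convergence theorem for ALG2 from the same chapter. That theorem typically requires: $F$ convex l.s.c.\ proper and with a uniform convexity/coercivity property; $G$ convex l.s.c.\ proper; $B$ with closed range; and $r>0$. Under $p(x)\ge 2$ all of these hold, and the conclusion is precisely $Bu_n\to Bu^*$ and $\nu_n\to\nu^*$ strongly in $H$, $\eta_{n+1}-\eta_n\to 0$ strongly, and $\{\eta_n\}$ bounded — but a priori in the spaces $H=(L^{p(x)}(\Omega))^N$ and the dual, not in $L^2$. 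So the remaining step is to transfer these from the variable-exponent topology to the $L^2$/$H^1_0$ topology stated in the theorem: since $p(x)\ge 2$ and $\Omega$ is bounded, we have the continuous embedding $L^{p(x)}(\Omega)\hookrightarrow L^2(\Omega)$, hence $\nu_n\to\nu^*$ and $\eta_{n+1}-\eta_n\to 0$ strongly in $(L^2)^N$ and $\{\eta_n\}$ bounded in $(L^2)^N$ follow immediately; and from $\nabla u_n\to\nabla u^*$ strongly in $(L^{p(x)})^N\hookrightarrow (L^2)^N$ together with Poincaré in $H^1_0$ we get $u_n\to u^*$ strongly in $H^1_0(\Omega)$.

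The main obstacle I anticipate is verifying the precise hypothesis of Glowinski's ALG2 convergence theorem on the functional $F$ — specifically the uniform convexity (or the ``$F$ and $F^*$ smooth enough'' condition, or the inf-compactness) that is exactly where the restriction $p(x)\ge 2$ enters. One must show that the modular $F$ associated to an exponent bounded below by $2$ enjoys whatever quantitative convexity Glowinski asks for, uniformly over $\Omega$; the cleanest route is the elementary pointwise inequality $(|a|^{p-2}a-|b|^{p-2}b)\cdot(a-b)\ge c_p|a-b|^p\ge c_p'|a-b|^2$ valid for $p\ge 2$ (using boundedness of $\Omega$ for the last step), integrated over $\Omega$, which gives strong monotonicity of $\partial F$ in the $L^2$ pairing and drives the telescoping/energy estimate in Glowinski's proof. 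The rest — measurability of $p(x)$, $p^-\ge 2$, $p^+<\infty$ ensuring $(L^{p(x)})^N$ is reflexive and $\nabla$ has closed range — is routine and I would only state it.
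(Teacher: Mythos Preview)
Your approach and the paper's are at bottom the same --- both simply invoke Theorem~5.1 of \cite[Chapter~VI]{Glowinski} --- but the paper is far terser (two sentences) and attributes the restriction $p(x)\ge 2$ differently. The paper's entire proof is: ``this is exactly Glowinski's Theorem~5.1; $p(x)\ge 2$ is needed so that the augmented Lagrangian is well defined.'' The point is that in Glowinski's framework the quadratic penalty $\tfrac{r}{2}\int_\Omega|\nabla v-\nu|^2\,dx$ and the multiplier pairing live in a \emph{Hilbert} space, here $(L^2(\Omega))^N$; the condition $p(x)\ge 2$ gives $L^{p(x)}(\Omega)\hookrightarrow L^2(\Omega)$, so $\nabla v,\nu\in (L^2)^N$ and $\LL_r$ is finite. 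The convergence conclusions then come out \emph{directly} in $L^2$ and $H^1_0$ from Glowinski's statement --- no transfer step is needed.

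Your added ``transfer from $L^{p(x)}$ to $L^2$'' step is therefore unnecessary, and it also contains an error: you write that boundedness of $\{\eta_n\}$ in the dual $L^{p'(x)}$ yields boundedness in $(L^2)^N$ via the embedding $L^{p(x)}\hookrightarrow L^2$, but the multipliers live in $L^{p'(x)}$ with $p'(x)\le 2$, and on a bounded domain the inclusion goes the other way, $L^2\hookrightarrow L^{p'(x)}$, so boundedness in $L^{p'(x)}$ does \emph{not} imply boundedness in $L^2$. This gap disappears once one sets things up as the paper (implicitly) does, with the Hilbert space $(L^2)^N$ carrying the penalty and the multiplier, so that Glowinski's theorem already delivers the $L^2$ statements. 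Your identification of $p(x)\ge 2$ with strong monotonicity of $\partial F$ is correct and is indeed an ingredient inside Glowinski's proof, but it is not the reason the paper singles out.
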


\begin{proof}
The proof is exactly Theorem 5.1 in \cite[Chapter VI]{Glowinski} in our case. The need for $p(x)\ge 2$ is in order to have the augmented Lagrangian well defined.
\end{proof}
 
It remains to see that the algorithm in Theorem \ref{thm.glowinski} is exactly the same as the one described in Section \ref{algoritmo}

\begin{teo}
The sequence defined in \eqref{a}--\eqref{c} with $r=1$ is the same as the one defined in \eqref{eq.u}--\eqref{eq.eta}.
\end{teo}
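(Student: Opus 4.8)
The plan is to show that the variational inequalities \eqref{a} and \eqref{b} with $r=1$ are exactly the weak formulations of \eqref{eq.u} and \eqref{eq.nu} respectively, and that \eqref{c} is literally \eqref{eq.eta}. The last identification is immediate by inspection, so the work is entirely in the first two.

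First I would treat \eqref{a}. Since $v$ ranges over the linear space $W^{1,p(x)}_0(\Omega)$, one may replace $v$ by $u_n + tw$ for arbitrary $w$ and $t\in\R$; the inequality then forces the linear functional in $w$ to vanish, turning \eqref{a} into the equality
\begin{equation*}
\int_\Omega (\eta_n + \nabla u_n - \nu_{n-1})\cdot\nabla w\, dx = \int_\Omega f w\, dx \qquad \text{for all } w\in W^{1,p(x)}_0(\Omega),
\end{equation*}
after setting $r=1$ and moving the $f$-term. This is precisely the weak form of $-\Delta u_n = \nabla\cdot(\eta_n - \nu_{n-1}) + f$, i.e. \eqref{eq.u}; here I would note that the boundary condition is encoded in the space $W^{1,p(x)}_0$, matching the homogeneous case (the inhomogeneous data $g$ being handled by the standard lifting already discussed in Section~\ref{algoritmo}).

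Next, for \eqref{b}, the test functions $\nu$ again range over the whole space $(L^{p(x)}(\Omega))^N$, so the same perturbation argument ($\nu \rightsquigarrow \nu_n + t\psi$) converts the inequality into the pointwise-a.e. identity
\begin{equation*}
|\nu_n|^{p(x)-2}\nu_n - \eta_n + (\nu_n - \nabla u_n) = 0,
\end{equation*}
which upon rearrangement with $r=1$ is exactly \eqref{eq.nu}: $|\nu_n|^{p(x)-2}\nu_n + \nu_n = \eta_n + \nabla u_n$. One should remark that this step uses the strict monotonicity of $\nu\mapsto |\nu|^{p(x)-2}\nu + \nu$ to guarantee the solution is unique and well-defined, which is where the reduction to the scalar equation $t(t^{p-2}+1)=r$ described in Section~\ref{algoritmo} comes from. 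Finally \eqref{c} with $r=1$ reads $\eta_{n+1} = \eta_n + \nabla u_n - \nu_n$, which is \eqref{eq.eta} verbatim, completing the identification of the two algorithms.

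The only mild subtlety — the closest thing to an obstacle — is justifying the passage from the variational inequalities to equalities: one must check that the relevant functionals are finite and Gâteaux-differentiable on the given spaces so that the perturbation argument is legitimate, and that the term $\int_\Omega |\nu|^{p(x)-2}\nu\cdot\psi\,dx$ makes sense for $\nu,\psi\in (L^{p(x)})^N$, which follows from Hölder's inequality in variable-exponent spaces since $|\nu|^{p(x)-2}\nu\in (L^{p'(x)})^N$. Under the standing hypothesis $p(x)\ge 2$ (inherited from Theorem~\ref{thm.glowinski}) all of this is routine, so I would state it briefly and conclude.
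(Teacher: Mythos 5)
Your proposal is correct and follows essentially the same route as the paper: perturb the test functions in \eqref{a} and \eqref{b} around $u_n$ and $\nu_n$, use the two signs (or the parameter $t$) to upgrade the variational inequalities to equalities, identify these with the weak form of \eqref{eq.u} and with \eqref{eq.nu}, and observe that \eqref{c} is \eqref{eq.eta} verbatim when $r=1$. The extra remarks on monotonicity and variable-exponent H\"older are fine but not needed beyond what the paper's short argument already uses.
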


\begin{proof}
The proof is standard.

In order to see that \eqref{a} is the same as \eqref{eq.u} one take in \eqref{a} $v=u_n + w$, where $w\in C^\infty_c(\Omega)$ is arbitrary. Then, we arrive at
$$
-\int_\Omega fw\, dx + \int_\Omega (\eta_n -\nu_{n-1}) \cdot\nabla w\, dx + \int_\Omega \nabla u_n\cdot \nabla w \, dx  \ge 0.
$$
Since $w$ is arbitrary, taking now $-w$ we obtain that
$$
-\int_\Omega fw\, dx + \int_\Omega (\eta_n -\nu_{n-1}) \cdot\nabla w\, dx + \int_\Omega \nabla u_n\cdot \nabla w \, dx = 0
$$
and this is exactly \eqref{eq.u} in its weak formulation.

Next, if in \eqref{b} we take $\nu = \nu_n + \hat\nu$, it follows that
$$
\int_\Omega \left(|\nu_n|^{p(x)-2}\nu_n - \eta_n + r(\nu_n - \nabla u_n)\right)\cdot \hat\nu\, dx\ge 0.
$$
Again, changing $\hat\nu$ by $-\hat \nu$ one gets that
$$
\int_\Omega \left(|\nu_n|^{p(x)-2}\nu_n - \eta_n + r(\nu_n - \nabla u_n)\right)\cdot \hat\nu\, dx=0.
$$
and since $\hat \nu$ is arbitrary, we arrive exactly at \eqref{eq.nu}.

Finally, \eqref{c} if exactly \eqref{eq.eta}.
\end{proof}

\begin{remark}
The restriction $p(x)\ge2$ in Theorem \ref{thm.glowinski} is of technical nature and does not appear to be of relevance in the implementation. In our experiments, the algorithm behaves well for all the range $1<p(x)<\infty$. See the next section.
\end{remark}

\section{Numerical experiments}
In this section we illustrate our implementation by comparing some exact solutions to the ones computed by our method.

To begin with, let us consider $p(x,y)=p$ constant and look for two cases, one with $p>2$ where the equation \eqref{eq.1} is degenerate and other with $1<p<2$ in which equation \eqref{eq.1} is singular. Then we give an example with variable exponent $p(x,y)$.

\subsection*{Example 1}
In this first example, we consider a constant exponent $p>2$. In this case, the function
$$
u(x,y)=(x^2+y^2)^\frac{p-2}{2p-2}
$$
verifies $-\Delta_p u = 0$.

We take $\Omega = (0,1) \times (0,1)$, $p=20$ and as a boundary data the same $u$. 

We make a regular partition of $\Omega$ of $100\times 100$.

If we denote by $u_h$ the computed solution, the errors obtained are
$$
\|u-u_h\|_\infty \sim 2\times 10^{-3},\qquad \|u-u_h\|_p \sim 10^{-3}.
$$
The exact and computed solutions for this example are shown in Figure \ref{Ej1}
\begin{figure}[!h]
\begin{center}
\includegraphics[width=0.45\textwidth]{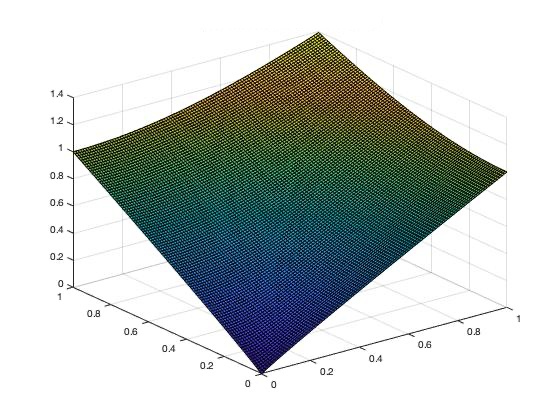}
\includegraphics[width=0.45\textwidth]{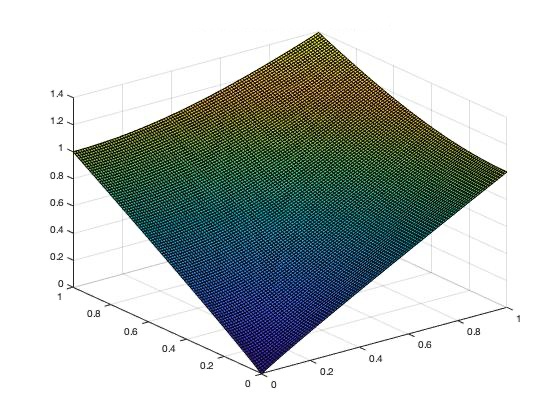}
\caption{The exact solution at the left and the computed solution at the right for Example 1} \label{Ej1}
\end{center}
\end{figure}

The time employed in the iteration was $\sim 75s$.

\subsection*{Example 2}
In this example, we consider the so-called {\em torsion problem},
$$
-\Delta_p u = 1.
$$
An exact solution to this problem is given by
$$
u(x,y) = c_p \left( 1- (x^2+y^2)^\frac{p}{2p-2}\right),
$$
where $c_p$ is a constant, $c_p = \frac{(p-1)}{p} 2^{-1/(p-1)}$.

Let us consider the domain $\Omega = (-\tfrac{1}{\sqrt{2}},\tfrac{1}{\sqrt{2}})\times (-\tfrac{1}{\sqrt{2}},\tfrac{1}{\sqrt{2}})$  and constant $p$, $p=1.1$. As boundary data we consider the same $u$ and again we make a regular partition of the domain $\Omega$ of $100\times 100$.

As in the previous example, we call $u_h$ the computed solution and the errors obtained are
$$
\|u-u_h\|_\infty \sim 2\times 10^{-4},\qquad \|u-u_h\|_p \sim 3\times 10^{-4}.
$$
The exact and computed solutions for this example are shown in Figure \ref{Ej2}.

\begin{figure}[!h]
\begin{center}
\includegraphics[width=0.45\textwidth]{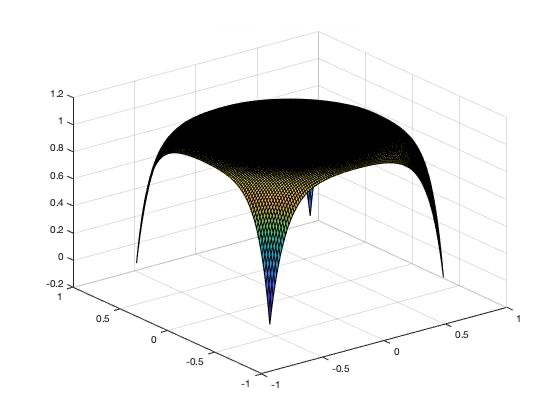}
\includegraphics[width=0.45\textwidth]{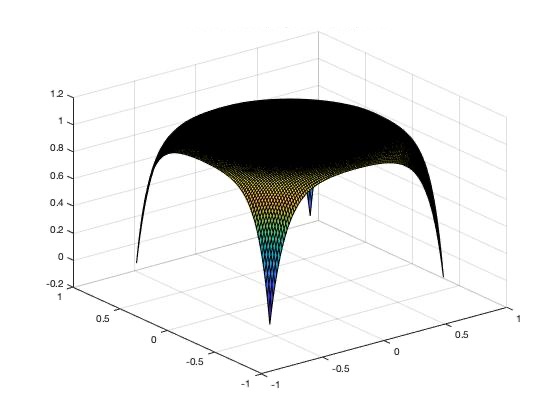}
\caption{The exact solution at the left and the computed solution at the right for Example 2} \label{Ej2}
\end{center}
\end{figure}

The time employed in the iteration was $\sim 80s$.

\subsection*{Example 3}
In this example we consider a variable exponent $p$
$$
p(x,y)= 1+\left(\frac{x+y}{2}+2\right)^{-1} \quad \text{and}\quad  u(x,y)= \sqrt{2} e^2 (e^{\frac12 (x+y)}-1).
$$
This function $u$ is a solution to $-\Delta_{p(x,y)} u = 0$.

Consider now the domain $\Omega=(-1,1)\times (-1,1)$ with a regular partition of $100\times 100$ and take as boundary data the same $u$.

Again, calling $u_h$ to the computed solution, the errors obtained are
$$
\|u-u_h\|_\infty \sim 1.5\times 10^{-5},\qquad \|u-u_h\|_p \sim 2.7\times 10^{-4}.
$$
The exact and computed solutions for this example are shown in Figure \ref{Ej3} and the time employed in the iteration was $\sim 81s$.

\begin{figure}[!h]
\begin{center}
\includegraphics[width=0.45\textwidth]{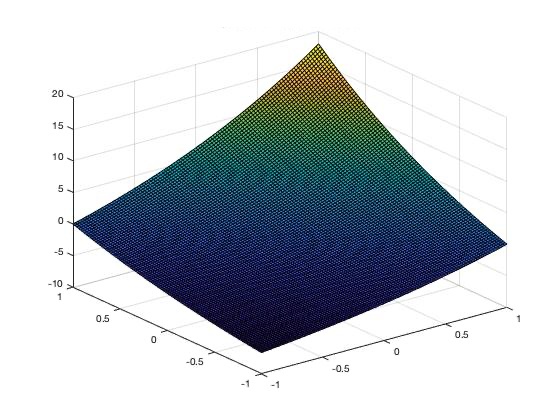}
\includegraphics[width=0.45\textwidth]{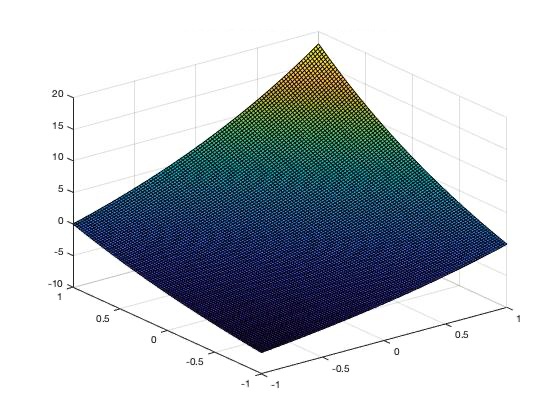}
\caption{The exact solution at the left and the computed solution at the right for Example 3} \label{Ej3}
\end{center}
\end{figure}

\subsection*{Example 4 - The torsion problem}
In this example, we work with the {\em torsion problem}, with homogeneous Dirichlet boundary data and constant exponent $p$. This is
$$
\begin{cases}
-\Delta_p u = 1 & \text{ in }\Omega\\ 
u=0  & \text{ on }\partial\Omega.
\end{cases}
$$

An exact solution to this problem is not available. We consider $\Omega= (-1,1)\times (-1,1)$ and a regular partition of $100\times 100$. We obtain the following computed solutions for different values of the exponent $p$. Some of theses solutions are shown in Figure \ref{Ej4}
\begin{figure}[!h]
\begin{center}
\includegraphics[width=0.25\textwidth]{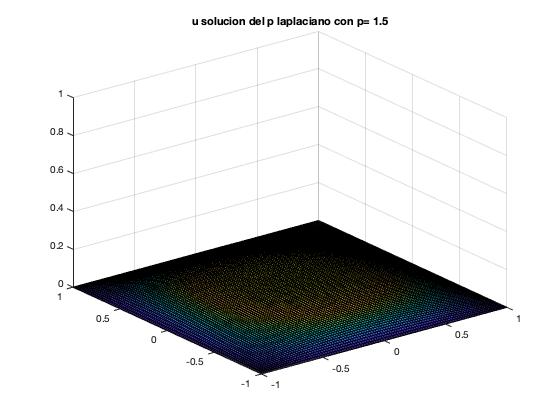}
\includegraphics[width=0.25\textwidth]{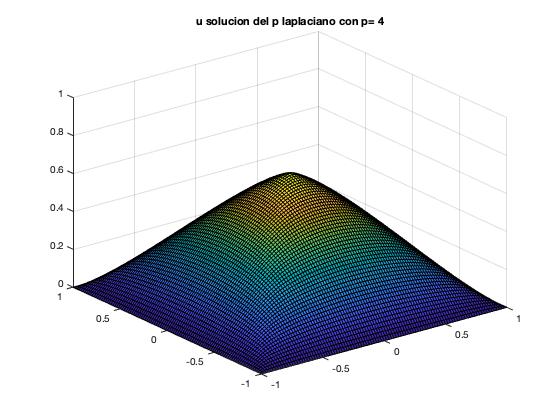}
\includegraphics[width=0.25\textwidth]{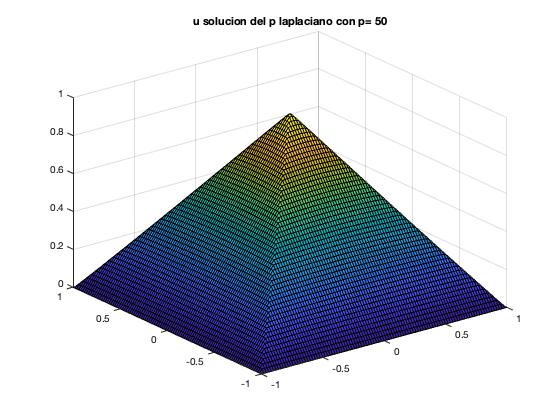}
\caption{To the left, the computed solution with $p=1.15$, in the center the computed solution with $p=4$ and to the right the computed solution with $p=50$ for the $p-$torsion problem in Example 4} \label{Ej4}
\end{center}
\end{figure}

Let us observe that for small values of $p$ the solution is {\em flat} and as the value of $p$ increases, the solution is getting closer to the {\em pyramid} with maximum at the center of the domain.

\subsection*{Example 5 - variable exponent}
Finally, we analyze the torsion problem for variable exponents
$$
\begin{cases}
-\Delta_{p(x,y)} u=1 & \text{ in }\Omega,\\
u=0  & \text{ on }\partial\Omega,
\end{cases}
$$
We consider a {\em discontiuous} exponent,
$$p(x,y) =
\begin{cases}
1.2 & x \leq 0,\\
4 & x>0
\end{cases}
$$
and a rectangular domain $\Omega=  [-2,2]\times [-1,1]$.

Again, an exact solution is not available.

Our method can handle also this case and the computed solution is shown in Figure \ref{Ej5} 
\begin{figure}
\begin{center}
\includegraphics[width=0.45\textwidth]{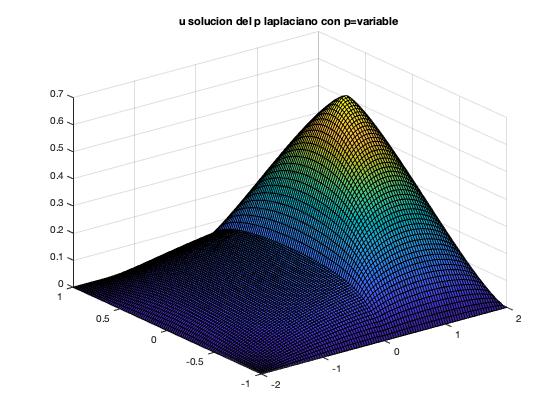}
\end{center}
\caption{The torsion problem with variable and discontinuous exponent in Example 5}\label{Ej5}
\end{figure}

As expected, the solution is {\em flat} in the region $\{x \leq 0\}$ where the exponent is small ($p(x,y)=1.2$) and has a {\em pyramid shape} in the region $\{ x>0\}$ where the exponent is large ($p(x,y)=4$).

The time employed in the iteration was $\sim 52s$.

\section{Conclusions}

We implemented a Finite Element based algorithm for the computation of solutions of equations with nonstandard growth of $p(x)-$Laplacian type. This algorithm uses the decomposition-coordination method that has the advantage that it is an iterative scheme that in each step of the iteration one has to solve a linear PDE and a nonlinear algebraic equation. Our numerical examples show that this method is extremely efficient for these cases.

\section*{Acknowledgements}

The authors want to thank I. Ojea for some useful discussions and the help provided in an early version of the code.

This research is partially supported by CONICET PIP Nº 11220150100032CO and by ANPCyT, PICT 2016-1022.


\end{document}